\numberwithin{equation}{section}
\newtheorem{theorem}{Theorem}[section]
\newtheorem{lemma}[theorem]{Lemma}
\begin{document}

\title[Hayman's equation]{On transcendental meromorphic solutions of Hayman's equation}

\author{Yueyang Zhang}
\address{School of Mathematics and Physics, University of Science and Technology Beijing, No.~30 Xueyuan Road, Haidian, Beijing, 100083, People's Republic of China}
\email{zhangyueyang@ustb.edu.cn}
\thanks{The author is supported by the Fundamental Research Funds for the Central Universities~{(FRF-TP-19-055A1)}, a Project supported by the National Natural Science Foundation of China~{(12301091)} and the Ministry of Science and Technology of the People's Republic of China~{(G2021105019L)}. The author thanks professor Rod Halburd of the University College London for giving a lot of valuable suggestions during the preparation of this paper. The author would also like to thank professor Wu Chengfa of Shenzhen University for several helpful discussions.}

\subjclass[2010]{Primary 30D35; Secondary 34M10}

\keywords{Hayman's equation, Transcendental meromorphic solutions, Order, Hyper-order}

\date{\today}

\commby{}

\begin{abstract}
We present a complete description of the form of transcendental meromorphic solutions of the second order differential equation
\begin{equation}\tag{\dag}
w''w-w'^2+a w'w+b w^2=\alpha w+\beta w'+\gamma,
\end{equation}
where $a$, $b$, $\alpha$, $\beta$ and $\gamma$ are all rational functions. Together with the Wiman--Valiron theory, we then show that any transcendental meromorphic solution $w$ of equation $(\dag)$ has hyper-order $\varsigma(w)\leq n$ for some integer $n\geq 0$. Moreover, if $w$ has finite order $\sigma(w)$, then $2\sigma(w)$ is a positive integer; if $\beta\equiv\gamma\equiv0$ and $w$ has infinite order or if $\gamma\not\equiv0$ and $w$ has infinite order, then the hyper-order $\varsigma(w)$ is a positive integer.
\end{abstract}

\maketitle


\section{Introduction}\label{Introduction}

In the last several decades, the global properties such as the growth and value distribution of meromorphic solutions of ordinary differential equations (ODEs) have been extensively investigated in the framework of Nevanlinna theory (see~\cite{Laine1993} and references therein). Throughout this paper, a meromorphic function is a complex function meromorphic in the complex plane $\mathbb{C}$. We shall assume that the readers are familiar with the standard notation and basic results of Nevanlinna theory (see also~\cite{Hayman1964}) such as the \emph{characteristic function} $T(r,f)$, the \emph{proximity function} $m(r,f)$, the \emph{counting function} $N(r,f)$ and the \emph{order} $\sigma(f)$ etc. Also recall that the lemma on the logarithmic derivative states that $m(r,f'/f)=O(\log rT(r,f))$, where $r\to\infty$ outside an exceptional set $E$ of finite linear measure (i.e. $\int_{E}dt<\infty$).

An important result due to Gol'dberg~\cite{Gol'dberg1956} states that all meromorphic solutions of the first order ODE: $\Omega(z,f,f')=0$, where $\Omega$ is polynomial in all of its arguments, are of finite order; see also~\cite[Chapter~11]{Laine1993}. A natural question is if there is an upper growth estimate for meromorphic solutions of the second order ODE:
\begin{equation}\label{SDODE}
\Omega(z,f,f',f'')=0,
\end{equation}
where $\Omega$ is polynomial in all of its arguments. In~\cite{Bank1975}, Bank conjectured that the characteristic function $T(r,f)$ for meromorphic solutions of equation \eqref{SDODE} would satisfy $T(r,f)\leq O(\exp(r^c))$ as $r\to \infty$ for some constant $c\geq 0$.
Bank~\cite{Bank1975} himself proved that his conjecture is true with an additional assumption that $N(r,\mu,f)=O(\exp(r^c))$ as $r\to \infty$ for some constant $c\geq 0$ and two distinct values of $\mu\in \mathbb{C}\cup\{\infty\}$. Steinmetz~\cite{Steinmetz1980} proved that Bank's conjecture is true in the special case that \eqref{SDODE} is homogeneous with respect to $f$, $f'$ and $f''$. Steinmetz actually proved that all meromorphic solutions of \eqref{SDODE} in this case take the form $f(z)=[g_1(z)/g_2(z)]e^{g_3(z)}$, where $g_j(z)$, $j=1,2,3$ are entire functions of finite order. In recent years, Bank's conjecture has been proved for some particular second order ODEs \cite{NgWu2019,Conte2015}. However, until now Bank's conjecture still remains open.

Hayman~\cite{Hayman1996} described a generalization of Bank's conjecture for the $n$-th order ODE: \begin{equation}\label{NDODE}
\Omega(z,f,f',\cdots,f^{(n)})=0,
\end{equation}
where $\Omega$ is polynomial in all of its arguments. In the same paper, Hayman used the Wiman--Valiron theory (see \cite{Hayman1974} and also \cite[Chapter~4]{Laine1993}) to study the growth of entire solutions of equation \eqref{NDODE} and provided a condition on the degrees of terms in \eqref{NDODE} under which the entire solutions of \eqref{NDODE} must have finite order. Hayman's theorem \cite[Theorem~C]{Hayman1996} is an extension of the aforementioned Gol'dberg's result. In particular, among the second order algebraic differential equations, Hayman pointed out his theorem does not apply to the equation
\begin{equation}\label{HAYMANEQfu}
a_1(f''f-f'^2)+a_2f'f+a_3f^2+b_1f''+b_2f'+b_3f+b_4=0,
\end{equation}
where $a_i$, $i=1, 2, 3$, $b_j$, $j=1, 2, 3, 4$ are polynomials and $a_1\not\equiv0$. When $a_2\equiv a_3\equiv0$, equation \eqref{HAYMANEQfu} is in some sense \emph{the simplest equation} not covered by Steinmetz's theorem and Hayman's theorem. Hayman conjectured that all meromorphic (entire) solutions of the simplest equation have finite order. This conjecture was confirmed by Chiang and Halburd~\cite{ChiangHalburd2003} in the autonomous case and completely confirmed by Halburd and Wang~\cite{HalburdWang}. In general, if $a_2,a_3$ do not both vanish identically, equation~\eqref{HAYMANEQfu} can have meromorphic solutions of infinite order. For example, the differential equation $f''f-f'^2-f'f=0$ has a solution $f(z)=e^{e^z}$.

One of the main purposes of this paper is to investigate the order of growth of transcendental meromorphic solutions of equation~\eqref{HAYMANEQfu}. Instead of equation \eqref{HAYMANEQfu}, we will actually deal with the second order differential equation
\begin{equation}\label{HAYMANEQ1}
w''w-w'^2+a w'w+b w^2=\alpha w+\beta w'+\gamma,
\end{equation}
where the coefficients $a$, $b$, $\alpha$, $\beta$ and $\gamma$ are all rational functions. Indeed, we may rewrite equation \eqref{HAYMANEQfu} as
\begin{equation}\label{HAYMANEQ2}
ff''-f'^2+\tau_1ff'+\tau_2f^2=\kappa_0+\kappa_1f+\kappa_2f'+\kappa_3f'',
\end{equation}
where $\tau_i$, $i=1,2$, $\kappa_j$, $j=0,1,2,3$ are rational
functions. Then, by letting $w=f-\kappa_3$, equation \eqref{HAYMANEQ2} becomes equation \eqref{HAYMANEQ1} with $a=\tau_1$, $b=\tau_2$, $\alpha=\kappa_1-2\tau_2\kappa_3-\tau_1\kappa_3'-\kappa_3''$,
$\beta=\kappa_2-\tau_1\kappa_3+2\kappa_3'$, $\gamma=\kappa_0+\kappa_1\kappa_3+\kappa_2\kappa_3'+\kappa_3\kappa_3''-\tau_2\kappa_3^2-\tau_1\kappa_3\kappa_3'+\kappa_3'^2-\kappa_3\kappa_3''$.

Halburd and Wang~\cite{HalburdWang} actually found all \emph{admissible} meromorphic solutions of equation \eqref{HAYMANEQ1} with $a\equiv b\equiv 0$ and $\alpha$, $\beta$, $\gamma$ being small functions of $w$. Here and in the following, a small function of $w$, say a meromorphic function $g(z)$, means that $T(r,g)=S(r,w)$, where the notation $S(r,w)$ denotes any quantity satisfying $S(r,w)=o(T(r,w))$, $r\rightarrow{\infty}$, possibly outside an exceptional set of finite linear measure. In particular, all transcendental meromorphic solutions of \eqref{HAYMANEQ1} are admissible when $a$, $b$, $\alpha$, $\beta$ and $\gamma$ are all rational functions. In their proof, Halburd and Wang constructed small functions of $w$ by using the first one or two terms in the local series expansion for $w$ at zeros, which bypasses the issues related to resonance.

In~\cite{zhang2017} the present author considered equation~\eqref{HAYMANEQ1} for the case that $a$ and $b$ are both constants but not necessarily both zero and $\alpha$, $\beta$, $\gamma$ are small functions of $w$. In particular, all nonconstant rational solutions of \eqref{HAYMANEQ1} in the autonomous case were obtained there. When $a,b$ are rational functions, it is in general impossible to list precisely the transcendental meromorphic solutions of equation \eqref{HAYMANEQ1}. In this paper, we shall first give the form of transcendental meromorphic solutions of equation \eqref{HAYMANEQ1} and prove the following

\begin{theorem}\label{maintheorem0}
Suppose that $w$ is a transcendental meromorphic solution of \eqref{HAYMANEQ1}. Then $w$ assumes one of the form described in the following list, where $c_1$, $k_1$ and $k_2$ are constants:
\begin{itemize}
\item[(1)]
If $\beta\equiv\gamma\equiv0$, $\alpha$ is a nonzero constant and $b+2a'+2a^2=0$, then either $w=-\alpha g^2/2$ for a transcendental meromorphic function $g$ satisfying $g'=ag\pm1$ or $e^{\int2adz}$ is a meromorphic function and there is a nonzero constant $k_1$ such that $w=\alpha k_1^{-2}[\cosh(c_1+k_1\int e^{-\int adz}dz)+1]e^{\int2adz}$.

\item[(2)]
If $\gamma\equiv0$, $\beta\not\equiv0$ and $(-\alpha/\beta)'+a(-\alpha/\beta)+b=0$, then $w$ satisfies $w'+(-\alpha/\beta)w=0$.

\item[(3)]
If $\gamma\equiv0$ and $\alpha+\beta'+a\beta\equiv0$, then $w$ satisfies $w'-hw+\beta=0$, where $h$ is a meromorphic function such that $h'+ah+b=0$.

\item[(4)]
If $\gamma\not\equiv0$
and there are two nonzero rational functions $h_1$ and $h_2$ such that $h_1'+a h_1+b=0$, 
$h_2'=(h_1-a)h_2+\alpha+\beta h_1$ and $h_2^2+\beta h_2+\gamma=0$, then $w$ satisfies $w'=h_1w+h_2$.

\item[(5)]
If $\gamma\not\equiv0$ and $A=[\beta(\alpha+\beta')-\gamma'-a(2\gamma-\beta^2)]/\gamma$ satisfies $A'+a A-2b=0$, denoting $B=2\alpha+\beta'+a\beta$, then

\begin{itemize}
\item[(a)]
if $e^{\int 2a dz}$ and $e^{\int \frac{A}{2}dz}$ are meromorphic functions and there are two nonzero constants $k_1,k_2$ such that $B'+2a B+A\alpha+\beta(k_1^2e^{-\int 2a dz}-\frac{A^2}{4}-b)=0$ and
\begin{equation*}
k_2^2=\frac{1}{k_1^2}\left[\frac{1}{4k_1^2}\left(\frac{\beta}{2}A-B\right)^2e^{\int 2a dz}-\left(\frac{\beta^2}{4}-\gamma\right)\right]e^{\int(A+2a)dz},
\end{equation*}
then $w=k_2e^{-\int\frac{A}{2}dz}\cosh (c_1+k_1\int e^{-\int a dz}dz)-\frac{1}{2k_1^2}(\frac{\beta}{2}A-B)e^{\int 2a dz}$; moreover, if $e^{\int 2a dz}$ is rational, then $e^{\int \frac{A}{2}dz}$ is rational and if $e^{\int 2a dz}$ is transcendental, then $\alpha\equiv\beta\equiv0$ and $2(a'+a^2+b)+(\gamma'/\gamma)'+a\gamma'/\gamma=0$;

\item[(b)]
if $e^{\int a dz}$ is a rational function and there is a nonzero constant $k_1$ such that $B'+2a B+A\alpha+\beta(k_1^2e^{-\int 2a dz}-\frac{A^2}{4}-b)=0$ and $k_1^2=\frac{(\frac{\beta}{2}A-B)^2}{\beta^2-4\gamma}e^{\int 2a dz}$, then $w=c_1e^{\int{(-\frac{A}{2}\pm k_1e^{-\int adz})}dz}-\frac{1}{2k_1^2}(\frac{\beta}{2}A-B)e^{\int 2a dz}$;

\item[(c)]
if $e^{\int(\frac{A}{2}+2a)dz}$ is a rational function and there is a nonzero constant $k_1$ such that $k_1=(\frac{\beta}{2}A-B)e^{\int(\frac{A}{2}+2a)dz}$, then $w=(\frac{\beta}{2}A-B)\frac{h^2}{4}-(\frac{\beta^2}{4}-\gamma)\frac{1}{\frac{\beta}{2}A-B}$, where $h$ is a transcendental meromorphic function satisfying $h'=ah+1$;

\item[(d)]
if $e^{\int(\frac{A}{2}+a)dz}$ is a rational function and there is a nonzero constant $k_1$ such that $k_1^2=(\frac{\beta^2}{4}-\gamma)e^{\int(A+2a)dz}$, then $w$ satisfies $w'+\frac{1}{2}(Aw+\beta)=k_1e^{-\int(\frac{A}{2}+a)dz}$;

\item[(e)]
if $\frac{\beta^2}{4}-\gamma\equiv0$, then $w$ satisfies $w'+\frac{1}{2}(Aw+\beta)=0$.

\end{itemize}
\end{itemize}
\end{theorem}

In Theorem~\ref{maintheorem0}, the meromorphic functions $e^{\int 2a dz}$ and $e^{\int \frac{A}{2}dz}$ in part~(1) and part~5(a) and the rational functions $e^{\int a dz}$, $e^{\int(\frac{A}{2}+2a)dz}$ and $e^{\int(\frac{A}{2}+a)dz}$ in parts~(5)(b)-(5)(d) denote fixed meromorphic solutions of certain homogeneous first order differential equations respectively. Note that all poles of $a$ and $A$, if any, are simple. The function $e^{-\int a dz}$ appearing in part~(1) and part~(5)(a) is in general meromorphic on two-sheeted Riemann surface, i.e., an \emph{algebroid} function; see~\cite{Katajamaki1993algebroid} for the theory on algebroid functions. In part~(1), part~(5)(a) and part~(5)(b), $c_1$ denotes the integration constant of the solutions of \eqref{HAYMANEQ1}; in other parts, $k_1$ and $k_2$ denote the integration constants of the solutions of certain differential equations in terms of $a$, $b$, $\alpha$, $\beta$ and $\gamma$ and their derivatives.

When $a$, $b$, $\alpha$, $\beta$ and $\gamma$ are all constants, the conditions in each part of Theorem~\ref{maintheorem0} can be further specified by some simple discussions. For example, in part~(5)(a), we must have $\beta=0$, $A+2a=0$ and $a\alpha=0$. In fact, if $\beta\not=0$, then we easily see that $a=b=A=0$, which then yields a contradiction. Similarly, in part~(5)(b), we have $a=b=0$; in part~(5)(c) and part~(5)(d), we have $A+4a=0$ and $A+2a=0$, respectively. Below we provide some concrete examples for the solutions in part~(4) and part~(5) in the \emph{non-autonomous} case of equation \eqref{HAYMANEQ1}. We shall consider the case $a\not\equiv0$. Note that the assumption $w$ is transcendental would also impose restrictions to the choice of the coefficients $a$, $b$, $\alpha$, $\beta$ and $\gamma$.

For the solutions in part~(4), for example, when $a=z$, $b=-1-z(z+1)$, $\beta=0$, $\alpha$ and $\gamma$ are constants such that $\alpha^2+\gamma=0$, then $h_1=z+1$ and $h_2=-\alpha$. Here we note that $\gamma\not\equiv\beta^2/4$. Otherwise, we have $h_2=-\beta/2$ and deduce from the equation $-h_1h_2+h_2'+ah_2-\alpha-\beta{h_1}\equiv0$ together with the expression of $A$ that $h_1=-A/2$. However, it then follows from $h_1'+ah_1+b=0$ that $A'+aA-2b=0$, a contradiction.

For the solutions in part~(5)(a), when $\beta\not\equiv0$, we suppose that $a=-1/z$, $A=-2sa$, $B=2l+4m/z^4$, $\alpha=l+4m/z^4$, $\beta=m/z^3$, $\gamma=n/z^2$ and $b+s(a'+a^2)=0$, where $l,m,n,s$ are nonzero constants to be determined. In this case, $e^{-\int a dz}=z$ and $e^{-\int\frac{A}{2}dz}=1/z^s$. Then, from the expression of $A$ we have $(A+2a)\gamma=\beta(B-\alpha)-\gamma'$, which gives $2n(s-2)=lm$. We also have the two equations
\begin{equation}\label{coeff 1}
B'+2a B+A\alpha+\beta \left(k_1^2z^2-\frac{s^2}{z^2}+\frac{2s}{z^2}\right)=0
\end{equation}
and
\begin{equation}\label{coeff 2}
k_2^2=\frac{1}{k_1^2}\left[\frac{1}{4k_1^2}\left(sa\beta+B\right)^2\frac{1}{z^2}-\left(\frac{\beta^2}{4}-\gamma\right)\right]z^{2(s-1)}.
\end{equation}
Substitution into the two equations \eqref{coeff 1} and \eqref{coeff 2} shows that $s^2-10s+24=0$, $2sl+mk_1^2=4l$ and, when $s=4$, also that $4l^2+4nk_1^2=0$ and $4k_2^2k_1^2+m^2=0$. Then we can choose $l,m,n,k_1,k_2$ satisfying the equations $4n=lm$, $4l+mk_1^2=0$ and $4k_2^2k_1^2+m^2=0$. Examples of solutions in part~(5)(a) in the case $\beta\equiv0$ will be given in next Section~\ref{order and hyper-order}.

For the solutions in part~(5)(b), when $\beta\not\equiv0$, we suppose that $a=-1/z$, $A=2sa$, $B=2lz^2-2m/z^2$, $\alpha=lz^2$, $\beta=m/z$, $\gamma=n/z^2+tz^2$ and $b=s(a'+a^2)$, where $l,m,n,s,t$ are nonzero constants to be determined. Similarly as before, we may finally find from the expression of $A$ that $(s+2)n=m^2$ and $-2st=m$ and from the other two conditions that $4l=k_1^2m$, $8m-m(s^2+2s)=0$ and, when $s=2$, that $l^2+tk_1^2=0$ and $m^2=4n$. Then we can choose $l,m,n,s,t,k_1$ satisfying the equations $4n=m^2$, $4t+m=0$, $4l=k_1^2m$ and $l^2+tk_1^2=0$. Note that $l=1$. When $\beta\equiv0$, the conditions in Part~(5)(b) read as $A=-2a-\gamma'/\gamma$, $A'+aA-2b=0$, $B=2\alpha$, $2\alpha'+(2a-\gamma'/\gamma)\alpha=0$ and $k_1^2\gamma+\alpha^2e^{\int 2adz}=0$. Then we can choose $a=-1/z$ and suitable $\alpha$ and $\gamma$.

For the solutions in parts~(5)(c)-(5)(e), it is easy to choose suitable $a$, $b$, $\alpha$, $\beta$ and $\gamma$ satisfying the conditions there. Here we point out that in part~(5)(c) we must have $\beta\not\equiv0$. Otherwise, we have $A=-2a-\gamma'/\gamma$, $A'+aA-2b=0$, $B=2\alpha$, $2\alpha'+(2a-\gamma'/\gamma)\alpha=0$ and $k_1^2+2\alpha e^{\int (\frac{A}{2}+2a)dz}=0$. Since $k_1\not=0$, we see that $\alpha\not\equiv0$ and thus $a\to0$ as $z\to\infty$. But then the equation $H'=aH+1$ cannot have transcendental meromorphic solutions.

It is difficult, if not impossible, to further determine the form of the integral $\int e^{-\int adz}dz$ in part~(1) and part~(5)(a), but we note that solution $w$ with an algebroid function $e^{-\int a dz}$ can indeed occur. Consider the solutions in part~(5)(a). When $a=-1/(2z)-1$, $\alpha=\beta=B=0$ and $A=-\gamma'/\gamma-2a$, if we choose $\gamma$ so that $e^{\int\frac{A}{2}dz}$ is a meromorphic function and $g=\sqrt{z}\varphi$ with an entire function $\varphi$ satisfying $2z\varphi'+\varphi=2ze^z$, then $g'=\sqrt{z}e^z=e^{-\int a dz}$ and, when $c_1=k_1=1$, $\cosh g$ is an entire function. Solution $w$ with  an algebraic function $e^{\int a dz}$ can also occur, as will be seen in Section~\ref{order and hyper-order} below.

The rest of this paper is structured as follows. In Section~\ref{order and hyper-order}, based on the results in Theorem~\ref{maintheorem0}, we use the Wiman--Valiron theory to investigate the order of growth of transcendental meromorphic solutions of \eqref{HAYMANEQ1}. In Section~\ref{Transcendental meromorphic solutions}, we present a proof for Theorem~\ref{maintheorem0}. Since now $a$ and $b$ are assumed to be rational functions, to derive the form of $w$ in part~(1) and part~(5) we need to first introduce some auxiliary functions which are defined and meromorphic in certain part of the plane. The meromorphicity of $w$ and $a$, $b$, $\alpha$, $\beta$ and $\gamma$ together with certain identities finally imply that the functions $e^{\int 2a dz}$ and $e^{\int \frac{A}{2}dz}$ in part~(1) and part~5(a), as well as the functions $e^{\int a dz}$, $e^{\int(\frac{A}{2}+2a)dz}$ and $e^{\int(\frac{A}{2}+a)dz}$ in parts~(5)(b)-(5)(d), are meromorphic functions in the plane.

\section{Growth of meromorphic solutions of equation \eqref{HAYMANEQ1}}\label{order and hyper-order}

In this section, we investigate the order of growth of transcendental meromorphic solutions of equation \eqref{HAYMANEQ1}. When $a\equiv b\equiv0$, all transcendental meromorphic solutions of \eqref{HAYMANEQ1} are of exponential type and of order one~\cite[Corollary~1.2]{HalburdWang}. When $a,b$ do not both vanish identically, the orders of transcendental meromorphic solutions of equation \eqref{HAYMANEQ1} have more possibilities. In particular, the solutions may have infinite order. For a meromorphic function $g(z)$ of infinite order, define the \emph{hyper-order} $\varsigma(g)$ of $g(z)$ as
\begin{equation*}
\varsigma(g)=\limsup_{r\to\infty}\frac{\log\log T(r,g)}{\log r}.
\end{equation*}
Based on the results in Theorem~\ref{maintheorem0}, we prove the following

\begin{theorem}\label{maintheorem1}
Suppose that $w$ is a transcendental meromorphic solution of equation \eqref{HAYMANEQ1}. Then $\varsigma(w)\leq n$ for some integer $n\geq 0$. Moreover,
\begin{itemize}
\item [(1)] If $w$ has finite order $\sigma(w)$, then $2\sigma(w)$ is a positive integer;
\item [(2)] If $\beta\equiv\gamma\equiv0$ and $w$ has infinite order or if
            $\gamma\not\equiv0$ and $w$ has infinite order, then $\varsigma(w)$ is a positive integer.
\end{itemize}

\end{theorem}

With certain choice of the coefficients $a$, $b$, $\alpha$, $\beta$ and $\gamma$, equation \eqref{HAYMANEQ1} can have meromorphic solutions with order $\sigma(w)=n/2$ or hyper-order $\varsigma(w)=n$ for any integer $n\geq 1$. Look at the solution $w$ in Theorem~\ref{maintheorem0}~(5)(a). Let $N\leq 1$ be an integer. If we choose $\beta=0$, $\gamma=z^{N}$, $a=Nz^{-1}/2$, $A+2a=-\gamma'/\gamma=-2a$, $B=2\alpha$ is a constant and $k_1^4k_2^2\gamma=\alpha^2z^N+k_1^2\gamma$, then $e^{\int 2a dz}$ is rational and $w$ has order $\sigma(w)=n/2$, $n=2-N$, with suitable $\alpha,k_1,k_2$. On the other hand, if we choose $a$ to be a polynomial of degree $n-1$ and thus $e^{\int 2a dz}$ is transcendental, then by choosing $\gamma=P^2$ for some polynomial $P=P(z)$, the equations $\alpha\equiv\beta\equiv0$, $A+2a=-\gamma'/\gamma$, $k_1^2k_2^2=1$ and $2(a'+a^2+b)+(\gamma'/\gamma)'+a\gamma'/\gamma=0$ can be satisfied and $w$ has hyper-order $\varsigma(w)=n$.

By Theorem~\ref{maintheorem1}, Bank's conjecture holds for equation \eqref{HAYMANEQ1}. By the proof of Theorem~\ref{maintheorem1}, solutions $w$ with infinite order occur in part~(1), part~(3) and part~(5)(a) of Theorem~\ref{maintheorem0} and, moreover, $a(z)\not\to 0$ as $z\to\infty$ in each of three cases. However, for the solutions in Theorem~\ref{maintheorem0}~(3), when $\beta\not\equiv0$, the exact order or hyper-order of $w$ satisfying the equation $w'-hw+\beta=0$, where $h$ is a transcendental meromorphic function such that $h'+ah+b=0$, is not determined. This problem can be viewed as a generalization of Br\"{u}ck's conjecture in the uniqueness theory of meromorphic functions~\cite{Bruck1996}. When $a$, $b$ and $\beta$ are constants, this problem has been completely resolved in \cite{zhang2024}.

For a transcendental entire function $g(z)$, we have the Taylor series $g(z)=\sum_{n=0}^{\infty}a_nz^n$. Denote by $M(r,g)=\max_{|z|=r}|g(z)|$ the \emph{maximum modulus} of $g(z)$ on the circle $|z|=r>0$ and by $\nu(r,g)$ the \emph{central index} of $g(z)$, which is defined as the greatest exponent of the maximal term in the Taylor series of $g(z)$. Basically, $\nu(r,g)$ is increasing, piecewise constant, right-continuous and $\nu(r,g)\to\infty$ as $r\to\infty$. Moreover,
\begin{equation*}
\sigma(g)=\limsup_{r\to\infty}\frac{\log \nu(r,g)}{\log r}.
\end{equation*}
See, e.g., \cite[Theorem~3.1]{Laine1993}. We also have
\begin{equation*}
\varsigma(g)=\limsup_{r\to\infty}\frac{\log\log \nu(r,g)}{\log r}.
\end{equation*}
The following is the Wiman--Valiron theorem (see e.g. \cite[Theorem~3.2]{Laine1993}).

\begin{lemma}[see \cite{Laine1993}]\label{le wm1}
Let $g$ be a transcendental entire function, let $0<\delta<1/4$ and $z$ be such that $|z|=r$ and
\begin{equation*}
|g(z)|>M(r,g)\nu(r,g)^{-1/4+\delta}
\end{equation*}
holds. Then there exists a set $F\subset \mathbb{R}_{+}$ of finite logarithmic measure, i.e., $\int_{F}dt/t<\infty$, such that
\begin{equation*}
g^{(m)}(z)=\left(\frac{\nu(r,g)}{z}\right)^m(1+o(1))g(z),
\end{equation*}
for all $m\geq 0$ and all $r\not\in F$.
\end{lemma}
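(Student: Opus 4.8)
The statement is the classical Wiman--Valiron estimate, and my plan is to reproduce the standard development of the Wiman--Valiron method (as in \cite[Chapter~3]{Laine1993} and Hayman's survey \cite{Hayman1974}). Attached to $g(z)=\sum_{n=0}^{\infty}a_nz^n$ are three functions: the maximum modulus $M(r)=M(r,g)$, the \emph{maximum term} $\mu(r)=\max_n|a_n|r^n$, and the central index $\nu=\nu(r,g)$, the largest $n$ with $|a_n|r^n=\mu(r)$. The starting facts are elementary: $\mu(r)\le M(r)$ by Cauchy's inequality; $\log\mu$ is convex and increasing as a function of $\log r$, with right derivative $\nu(r)$; and $\nu(r)\to\infty$ since $g$ is transcendental. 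Writing $\lambda(r)=d\nu/d\log r$ for the growth rate of the central index, I would reduce the whole argument to showing that the coefficients $|a_n|r^n$ concentrate near $n=\nu$, so that in every series identity only the indices $n$ close to $\nu$ matter.

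The first step is to isolate the exceptional set. Since $\nu(r)$ is nondecreasing in $t=\log r$, a Borel--Nevanlinna argument gives $\lambda(r)=d\nu/d\log r\le\nu(r)^{1+\eta}$ for every $\eta>0$ outside a set $F\subset\mathbb{R}_+$ of finite logarithmic measure; this $F$ is independent of $m$ and is the set in the statement. The crux, also valid off $F$, is the saddle-point (Laplace-method) estimate quantifying the concentration: the terms obey the Gaussian-type law $|a_{\nu+k}|r^{\nu+k}\asymp\mu(r)e^{-k^2/2\lambda(r)}$, with super-polynomial decay further out forced by the convexity of $\log\mu$ and the slow growth of $\nu$, and consequently
\[
M(r)\asymp\sum_n|a_n|r^n\asymp\mu(r)\sqrt{\lambda(r)}\qquad(r\notin F).
\]
In particular the dispersion of the coefficients about $\nu$ is of order $\sqrt{\lambda}\lesssim\nu^{1/2+\eta/2}$, and $M/\mu\asymp\sqrt\lambda$.

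The main computation is then a direct manipulation of the Taylor series. For each fixed $m\ge0$,
\[
z^m g^{(m)}(z)-\nu^m g(z)=\sum_{n}\bigl([n]_m-\nu^m\bigr)a_nz^n,\qquad [n]_m=n(n-1)\cdots(n-m+1).
\]
Splitting the sum at $|n-\nu|=\nu^{1/2+\varepsilon}$, on the inner window one has $[n]_m=\nu^m(1+o(1))$ uniformly (since $|n-\nu|/\nu\to0$), and the expansion $[n]_m-\nu^m=O\bigl(\nu^{m-1}|n-\nu|\bigr)$ together with the second-moment bound $\sum_n(n-\nu)^2|a_n|r^n\lesssim\lambda(r)\sum_n|a_n|r^n$ and Cauchy--Schwarz bounds the inner contribution by $O(\nu^{m-1}\lambda(r)\mu(r))$; the outer tail is negligible because the Gaussian decay of the coefficients defeats the polynomial growth of $[n]_m$. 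Invoking the hypothesis together with the saddle-point relation, $|g(z)|>M(r)\nu^{-1/4+\delta}\asymp\mu(r)\sqrt{\lambda(r)}\,\nu^{-1/4+\delta}$, and dividing by $\nu^m|g(z)|$, one finds that the remainder has relative size $O(\nu^{-1/4+\eta/2-\delta})$. Since $\eta$ may be taken arbitrarily small, this tends to $0$ precisely because $\delta>0$, which yields $z^m g^{(m)}(z)=\nu^m(1+o(1))g(z)$ for $r\notin F$, i.e.\ the assertion; the complementary restriction $\delta<1/4$ is exactly what keeps the threshold $M(r)\nu^{-1/4+\delta}$ below $M(r)$ so that the hypothesis can hold on a nonempty set.

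The main obstacle is the concentration estimate of the second paragraph, in particular the lower bound $M(r)\gtrsim\mu(r)\sqrt{\lambda(r)}$ and the second-moment control of the coefficients off the exceptional set: this is the genuine analytic content of the Wiman--Valiron method. Its role is decisive, because if one retained only the trivial bound $M(r)\ge\mu(r)$ the exponents would fail to close; it is precisely the $M(r)$-normalization in the hypothesis, matched to the $\sqrt{\lambda}\lesssim\nu^{1/2}$ dispersion, that produces the threshold exponent $1/4$. Secondary points are the Borel--Nevanlinna lemma delivering the finite-logarithmic-measure set $F$ on which $\lambda(r)\le\nu(r)^{1+\eta}$, and the verification that a single $F$ serves all orders $m$ simultaneously. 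Once these estimates are in hand, the series manipulation and the passage to all $m$ are routine.
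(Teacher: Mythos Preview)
The paper does not prove this lemma at all: it is quoted verbatim from \cite{Laine1993} (see the bracket ``see \cite{Laine1993}'' in the statement) and used as a black box in the proof of Theorem~\ref{maintheorem1}. So there is no ``paper's own proof'' to compare against; your proposal stands on its own as a sketch of the classical Wiman--Valiron argument.

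As a sketch of that argument your outline is broadly faithful to the development in Hayman's survey \cite{Hayman1974} and in \cite[Chapter~3]{Laine1993}: the Borel--Nevanlinna step producing the exceptional set, the coefficient-concentration (comparison of $|a_n|r^n$ with the maximal term governed by the convexity of $\log\mu$), and the series manipulation $z^m g^{(m)}-\nu^m g=\sum([n]_m-\nu^m)a_nz^n$ are the right ingredients, and you correctly identify the concentration estimate as the hard analytic core. Two cautions if you intend to turn this into a full proof rather than a plan. First, the heuristic $|a_{\nu+k}|r^{\nu+k}\asymp\mu(r)e^{-k^2/2\lambda}$ and the relation $M(r)\asymp\mu(r)\sqrt{\lambda(r)}$ are not literally theorems in this form; the rigorous versions in \cite{Hayman1974} and \cite{Laine1993} are one-sided inequalities with auxiliary parameters, and the actual bookkeeping of exponents (why precisely $1/4$ emerges as the threshold) is more delicate than your write-up suggests. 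Second, the ``second-moment bound'' you invoke is not derived in your text and is itself one of the main technical lemmas of the theory. So your proposal is a correct roadmap, but several of the steps you label as routine are where the real work lies; for the purposes of this paper, citing \cite{Laine1993} as the author does is entirely appropriate.
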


\begin{proof}[Proof of Theorem~\ref{maintheorem1}]

The solutions $w$ in Theorem~\ref{maintheorem0}~(1) and Theorem~\ref{maintheorem0}~(5)(a) have similar orders or hyper-orders. They will be considered in the end of the proof.

For the solution $w$ in Theorem~\ref{maintheorem0}~(2), we may integrate the differential equation $w'+(-\alpha/\beta)w=0$ and obtain that $w$ is of the form $w=u(z)e^{v(z)}$, where $u(z)$ is a rational function and $v(z)$ is a polynomial. Thus $w$ has positive integer order.

The solution $w$ in Theorem~\ref{maintheorem0}~(3) may have infinite order. Recall that $w$ and $h$ satisfy the two differential equations
\begin{equation}\label{order1}
\begin{split}
w'=hw-\beta, \qquad h'=-ah-b.
\end{split}
\end{equation}
Since $a$, $b$ and $\beta$ are rational functions, $w$ and $h$ both have at most finitely many poles. Then there are two nonzero polynomials $\omega_1$ and $\omega_2$ such that $W=w\omega_1$ and $H=h\omega_2$ are both entire functions. By substituting $w=W/\omega_1$ and $h=H/\omega_2$ into the equations in \eqref{order1} respectively, we obtain
\begin{equation}\label{order2}
\begin{split}
W'=\left(\frac{H}{\omega_2}+\frac{\omega_1'}{\omega_1}\right)W-\beta\omega_1, \qquad
H'=-\left(a-\frac{\omega_2'}{\omega_2}\right)H-b\omega_2.
\end{split}
\end{equation}
Write
\begin{equation}\label{order3}
a(z)=\sum_{i=0}^{m}\frac{\mu_i}{(z-\nu_i)^{n_i}}+Q(z),
\end{equation}
where $m,n_i$ are nonnegative integers, $\mu_i,\nu_j$ are constants and $Q(z)$ is a polynomial. We distinguish the two cases whether or not $Q(z)$ in \eqref{order3} vanishes identically.

If $Q(z)$ in \eqref{order3} vanishes identically, then by integrating the second differential equation in \eqref{order2} we easily deduce that $H$ is a polynomial. Similarly, since $W$ is transcendental, we must have $H(z)/\omega_2(z)=\eta_1 z^{m_1}(1+o(1))$ as $z\to\infty$, where $m_1\geq 0$ is an integer and $\eta_1$ is a nonzero constant. Now we choose $z=re^{i\theta}$, $\theta\in[0,2\pi)$ and $r>0$ such that $|W(z)|=M(r,W)$. It follows that $\beta(z)\omega_1(z)/W(z)=o(1)\cdot z^{-k}$ as $z\to\infty$ for a large positive integer $k$. We apply Lemma~\ref{le wm1} to $W$ and divide both sides of the first equation in \eqref{order2} by $W$ to deduce from the resulting equation that
\begin{equation*}
\frac{\nu(r,W)}{z}\left(1+o(1)\right)=\eta_1 z^{m_1}\left(1+o(1)\right)+\frac{\omega_1'(z)}{\omega_1(z)}+o(1)\cdot z^{-k},
\end{equation*}
where $r\to\infty$ outside an exceptional set of finite logarithmic measure. This yields $\nu(r,W)=\eta_1r^{m_1+1}(1+o(1))$. It is standard to obtain from this estimate that $\sigma(W)=m_1+1$; see e.g. \cite[pp.~74-75]{Laine1993}. Thus $\sigma(w)=m_1+1$ is a positive integer. The above estimates actually show that $\sigma(w)$ is a positive integer when $h(z)$ is a rational function, regardless whether $Q(z)$ in \eqref{order3} vanishes identically or not.

If $Q(z)$ in \eqref{order3} does not vanish identically and $h$ is transcendental, denoting the degree of $Q(z)$ by $m_2$, then by the same arguments as before we may obtain $\sigma(h)=\sigma(H)=m_2+1$. Moreover, by integrating the second differential equation in \eqref{order2}, we easily deduce that $|H(z)|\leq \exp(\eta_2 r^{\sigma(h)})$ for all $r>0$ and some positive constant $\eta_2$. Again, by applying Lemma~\ref{le wm1} to $W$, we deduce from the first differential equation in \eqref{order2} that
\begin{equation*}
\frac{\nu(r,W)}{z}\left(1+o(1)\right)=\left(\frac{H(z)}{\omega_2(z)}+\frac{\omega_1'(z)}{\omega_1(z)}\right)+o(1)\cdot z^{-k},
\end{equation*}
where $r\to\infty$ outside an exceptional set of finite logarithmic measure. This yields $\nu(r,W)\leq \exp(2\eta_2 r^{\sigma(h)})$ for all $r$ outside an exceptional set of finite logarithmic measure. By using Borel's lemma (see \cite[Lemma~1.1.2]{Laine1993}) to remove the exceptional set, we get $\varsigma(w)\leq \sigma(h)=m_2+1$. In particular, if $\beta\equiv0$ and thus $\alpha\equiv0$, by the lemma on the logarithmic derivative we have
\begin{equation*}
T(r,h)=m(r,h)+O(\log r)=m\left(r,\frac{w'}{w}\right)+O(\log r)=O(\log rT(r,w)),
\end{equation*}
where $r\to\infty$ outside an exceptional set of finite linear measure. It follows that $T(r,h)\leq \eta_3\log(rT(r,w))$ for some positive constant $\eta_3$ and all $r$ outside an exceptional set of finite linear measure. By using Borel's lemma (see \cite[Lemma~1.1.1]{Laine1993}) to remove the exceptional set, we also have $m_2+1\leq \varsigma(w)$. Thus $\varsigma(w)=m_2+1$.

For the solution $w$ in Theorem~\ref{maintheorem0}~(4), since there are two nonzero rational functions $h_1$ and $h_2$ such that $w'=h_1w+h_2$, $w$ has positive integer order.

Now look at the solution $w$ in Theorem~\ref{maintheorem0}~(5). From the proof of Theorem~\ref{maintheorem0} in Section~\ref{Transcendental meromorphic solutions} we know that $w$ satisfies the differential equation in \eqref{EQ10}, i.e.,
\begin{equation}\label{order gro1}
\left(w'+\frac{1}{2}[Aw+\beta]\right)^2=\left(g+\frac{A^2}{4}\right)w^2+\left(\frac{\beta}{2}A-B\right)w+\left(\frac{\beta^2}{4}-\gamma\right),
\end{equation}
where $g$ is a meromorphic function with at most finitely many poles and $A$ and $B$ are both rational functions. Then there is a nonzero polynomial $\omega_3$ such that $W=w\omega_3$ is an entire function. By substituting $w=W/\omega_3$ into \eqref{order gro1} and then dividing both sides of the resulting equation by $w^2$ and then applying Lemma~\ref{le wm1} to $W$, we finally obtain
\begin{equation}\label{order gro3}
\left[\frac{\nu(r,W)}{z}\left(1+o(1)\right)+\frac{1}{2}A(z)-\frac{\omega_3'(z)}{\omega_3(z)}+o(1)\cdot z^{-k}\right]^2=g(z)+\frac{A(z)^2}{4}+o(1)\cdot z^{-k},
\end{equation}
where $r\to\infty$ outside an exceptional set of finite logarithmic measure. Thus, for the solution $w$ in Theorem~\ref{maintheorem0}~(5)(b)-(5)(e), since $g+A^2/4\equiv0$, by the properties of $\nu(r,W)$ we see that $A(z)/2-\omega_3'(z)/\omega_3(z)\not\to 0$ as $z\to\infty$. Then, by the same arguments as before, we easily obtain from \eqref{order gro3} that $W$, and hence $w$, has positive integer order.

Consider the solution $w$ in Theorem~\ref{maintheorem0}~(5)(a). From the proof of Theorem~\ref{maintheorem0} we know that $g+A^2/4=k_1^2e^{-\int 2adz }$ is a meromorphic function and $k_1$ is a nonzero constant. Then $g+A^2/4$ must be of the form $u_1(z)e^{v_1(z)}$, where $u_1(z)$ is a rational function and $v_1(z)$ is a polynomial. If $e^{\int2adz }$ is a rational function, then $v_1(z)$ is a constant and $a(z)\to 0$ as $z\to\infty$. Also, since $e^{\int\frac{A}{2}dz}$ is a rational function, $A(z)\to 0$ as $z\to\infty$. By the properties of $\nu(r,W)$, we see from \eqref{order gro3} that $zu_1(z)\not\to 0$ as $z\to\infty$ and thus
\begin{equation}\label{order gro4}
\frac{\nu(r,W)}{z}=\eta_4 z^{\frac{m_3}{2}}\left(1+o(1)\right),
\end{equation}
where $m_3\geq -1$ is an integer and $\eta_4$ is nonzero constant. This implies that $W$, and hence $w$, has finite order $(m_3+2)/2$ and thus $2\sigma(w)=m_3+2$ is an integer.

If $e^{\int2adz }$ is transcendental, then $v_1(z)$ is a nonconstant polynomial of degree $m_4\geq 1$. Note that $T(r,g+A^2/4)=\eta_5r^{m_4}(1+o(1))$ as $r\to\infty$ for some positive constant $\eta_5$. Then we have from \eqref{order gro3} that $\nu(r,W)\leq \exp(\eta_6 r^{m_4})$, where $\eta_6$ is a positive constant, for all $r$ outside an exceptional set of finite logarithmic measure. This yields $\varsigma(w)=\varsigma(W)\leq m_4$. On the other hand, since $\gamma\not\equiv0$, from the proof of Theorem~\ref{maintheorem0} in Section~\ref{Transcendental meromorphic solutions} below we know that $m(r,1/w)=O(\log rT(r,w))$. Together with the lemma on the logarithmic derivative, we divide both sides of the equation in \eqref{order gro1} by $w^2$ and then deduce from the resulting equation that
\begin{equation*}
\begin{split}
&\ T\left(r,g+\frac{A^2}{4}\right)=m\left(r,g+\frac{A^2}{4}\right)+O(\log r)\\
=&\ m\left(r,\left(\frac{w'}{w}+\frac{1}{2}A+\frac{1}{2}\frac{\beta}{w}\right)^2-\left(\frac{\beta}{2}A-B\right)\frac{1}{w}-\left(\frac{\beta^2}{4}-\gamma\right)\frac{1}{w^2}\right)+O(\log r)\\
\leq&\
2m\left(r,\frac{w'}{w}\right)+5m\left(r,\frac{1}{w}\right)+O(\log r)=O(\log rT(r,w)),
\end{split}
\end{equation*}
where $r\to\infty$ outside an exceptional set of finite linear measure. Then, by similar arguments as before, we obtain from the above estimate that $m_4\leq \varsigma(w)$. Thus $\varsigma(w)=m_4$.

Finally, look at the solutions in Theorem~\ref{maintheorem0}~(1). If $w=-\alpha g^2/2$ for a transcendental meromorphic function $g$ satisfying $g'=ag\pm1$, then $g$, and hence $w$, has positive integer order. Otherwise, from the proof of Theorem~\ref{maintheorem0} in Section~\ref{Transcendental meromorphic solutions} we know that the solution $w$ in Theorem~\ref{maintheorem0}~(1) satisfies the differential equation in \eqref{EQ1}, which can be rewritten as
\begin{equation}\label{order gro5}
\left(\frac{w'}{w}-2a\right)^2+2\frac{\alpha}{w}=f(z)+3a^2=k_1^2e^{-\int2adz},
\end{equation}
where $k_1$ is a nonzero constant and $e^{-\int2adz}$ is a meromorphic function. Also, we have $m(r,1/w)=O(\log rT(r,w))$. Then by exactly the same arguments as for the solutions in Theorem~\ref{maintheorem0}~(5)(a), we can show that $2\sigma(w)$ is a positive integer when $e^{-\int2adz}$ is rational or $\varsigma(w)$ is a positive integer when $e^{-\int2adz}$ is transcendental.

By summing the above estimates together with the results in Theorem~\ref{maintheorem0}, we obtain the desired two assertions in Theorem~\ref{maintheorem1}.

\end{proof}

\section{Transcendental meromorphic solutions of equation \eqref{HAYMANEQ1}}\label{Transcendental meromorphic solutions}

\begin{proof}[Proof of Theorem~\ref{maintheorem0}]

Let $w$ be a transcendental meromorphic solution of equation \eqref{HAYMANEQ1}. If $\alpha\equiv\beta\equiv\gamma\equiv0$, then equation \eqref{HAYMANEQ1} becomes $(w'/w)'+a(w'/w)+b=0$. This is the special case of part (3) of Theorem~\ref{maintheorem0}. From now on, we suppose that at least one of $\alpha$, $\beta$ and $\gamma$ is nonzero.

Define the set $\Phi_f$ for any meromorphic function $f$ as follows: if $f\equiv0$, then $\Phi_f=\emptyset$; if $f\not\equiv0$, then $\Phi_f$ denotes the set of all zeros and poles of $f$. Let $\Phi=\Phi_{a}\cup\Phi_{b}\cup\Phi_{\alpha}\cup\Phi_{\beta}\cup\Phi_{\gamma}$. Then $\Phi$ contains at most finitely many points. Let $z_0\in\Psi:=\mathbb{C}\backslash\Phi$ be either a zero or a pole of $w$. Then, in a neighborhood of $z_0$, $w$ has a Laurent series expansion of the form $w(z)=a_0\xi^p+a_1\xi^{p+1}+O(\xi^{p+2})$, where $\xi=z-z_0$, $a_0\not=0$, $a_1$ are constants and $p$ is a nonzero integer. Substitution into \eqref{HAYMANEQ1} gives
\begin{equation*}
\begin{split}
&\left(-pa_0^2\xi^{2p-2}+\cdots\right)+\left(a(z_0)+\cdots\right)\left(pa_0^2\xi^{2p-1}+\cdots\right)+\left(b(z_0)+\cdots\right)\left(a_0^2\xi^{2p}+\cdots\right)\\
=&\,\left(\alpha(z_0)+\cdots\right)\left(a_0\xi^p+\cdots\right)+\left(\beta(z_0)+\cdots\right)\left(a_0p\xi^{p-1}+\cdots\right)+\left(\gamma(z_0)+\cdots\right).
\end{split}
\end{equation*}
It follows that $p=2$ if $\beta\equiv\gamma\equiv0$ and $p=1$ in other cases. In particular, we see that $w$ has at most finitely many poles and is analytic on $\Psi$. As in \cite{HalburdWang,zhang2017}, we distinguish three cases: (1) $\alpha\not\equiv0$, $\beta\equiv\gamma\equiv0$; (2) $\beta\not\equiv0$, $\gamma\equiv0$; (3) $\gamma\not\equiv0$.

\vskip 3pt

\noindent\textbf{Case 1:} $\alpha\not\equiv0$, $\beta\equiv\gamma\equiv0$.

\vskip 3pt

By substituting $w(z)=a_0\xi^2+a_1\xi^3+O(\xi^4)$ into \eqref{HAYMANEQ1}, about any $z_0\in\Psi$ such that $w(z_0)=0$, we have
\begin{equation*}
w(z)=-\frac{\alpha(z_0)}{2}\xi^2-\left(\frac{\alpha'(z_0)+a(z_0)\alpha(z_0)}{2}\right)\xi^3+O(\xi^4).
\end{equation*}
Since $w$ is analytic on $\Psi$, it follows that
\begin{equation}\label{EQ1}
f(z):=\left(\frac{w'}{w}-\frac{\alpha'+a\alpha}{\alpha}\right)^2+2\frac{\alpha}{w}-2\frac{\alpha'+a\alpha}{\alpha}\left(\frac{w'}{w}\right)
\end{equation}
is also analytic on $\Psi$. (The definition of $f(z)$ in \cite{zhang2017} in this case is incorrect). By the definition of $\Psi$, we have $N(r,f)=O(\log r)$. We write equation \eqref{HAYMANEQ1} with $\beta\equiv\gamma\equiv0$ as
\begin{equation}\label{EQ2}
\frac{1}{w}=\frac{1}{\alpha}\left[\left(\frac{w'}{w}\right)'+a\frac{w'}{w}+b\right].
\end{equation}
By taking the proximity functions on both sides of equation \eqref{EQ2} together with the lemma on the logarithmic derivative, we may deduce that $m(r,1/f)=O(\log rT(r,w))$. Then, together with the lemma on the logarithmic derivative, we may obtain from \eqref{EQ1} that $m(r,f)=O(\log rT(r,w))$. Hence $T(r,f)=S(r,w)$, i.e., $f$ is a small function of $w$.

Differentiating \eqref{EQ1} and using \eqref{HAYMANEQ1} to eliminate $(w'/w)'$ from the resulting equation and then using \eqref{EQ1} to eliminate $(w'/w)^2$ from the resulting equation gives
\begin{equation}\label{EQ3}
\phi w'=\varphi w+2\alpha',
\end{equation}
where
\begin{equation*}
\begin{split}
\phi&=2\left[b+2a\frac{\alpha'+a\alpha}{\alpha}+2\left(\frac{\alpha'+a\alpha}{\alpha}\right)'\right],\\
\varphi&=\left[4b\frac{\alpha'+a\alpha}{\alpha}+2\left(\frac{\alpha'+a\alpha}{\alpha}\right)'\frac{\alpha'+a\alpha}{\alpha}+2a\left(\frac{\alpha'+a\alpha}{\alpha}\right)^2\right]-f'-2af.
\end{split}
\end{equation*}

\vskip 3pt

\noindent\textbf{Case 1a:} $\phi\equiv0$.

\vskip 3pt

When $\phi\equiv0$, since $a$, $b$ and $\alpha$ are all rational functions and $f$ is a small function of $w$, by \eqref{EQ3} we must have $\alpha'\equiv 0$ and $\varphi\equiv0$. Thus $\alpha$ is a constant, $b+2a'+2a^2\equiv0$ and also
\begin{equation*}
\begin{split}
f'+2af=4ba+2aa'+2a^3=-6a\left(a'+a^2\right).
\end{split}
\end{equation*}
Denote $F=f+3a^2$. Then $F$ satisfies the differential equation $F'+2aF=0$. Obviously, $F=0$ is a solution of this equation. If $F\equiv0$, then equation \eqref{EQ1} becomes
\begin{equation}\label{EQ4}
\begin{split}
\left(w'-2aw\right)^2+2\alpha w=0.
\end{split}
\end{equation}
By writing $w=-\alpha g^2/2$, where $g$ is a meromorphic function, it follows from \eqref{EQ4} that $g$ satisfies the equation $g'=ag\pm1$. Otherwise, we may write $F=f+3a^2=k_1^2e^{-\int 2adz}$, where $k_1$ is a nonzero constant and $e^{-\int2adz}$ is a meromorphic function with at most finitely many zeros and poles. In terms of $u=k_1^{2}we^{-\int2adz}$, equation \eqref{EQ1} becomes
\begin{equation*}
\begin{split}
u'^2-k_1^2e^{-\int 2adz}u^2+2k_1^2\alpha ue^{-\int2adz}=0,
\end{split}
\end{equation*}
which can be rewritten as
\begin{equation*}
\begin{split}
\left(\frac{u'}{k_1e^{-\int adz}}+(u-\alpha)\right)\left(\frac{u'}{k_1e^{-\int adz}}-(u-\alpha)\right)=-\alpha^2.
\end{split}
\end{equation*}
Here $e^{\int a dz}$ is in general an algebroid function and has at most finitely many algebraically branched points in the plane, as mentioned in the introduction. We may fix one branch of $e^{\int a dz}$. Denote $\kappa:=u'/(k_1e^{-\int adz})+(u-\alpha)$. From the above equation we have $u'/(k_1e^{-\int adz})-(u-\alpha)=-\alpha^2\kappa^{-1}$ and further that
\begin{equation}\label{EQ4 e}
\begin{split}
u=\frac{1}{2}\left(\kappa+\alpha^2\kappa^{-1}\right)+\alpha,  \qquad
u'=\frac{1}{2}\left(\kappa-\alpha^2\kappa^{-1}\right)k_1e^{-\int adz}.
\end{split}
\end{equation}
By taking the derivatives on both sides of the first equation in \eqref{EQ4 e} and then comparing the resulting equation with the second equation in \eqref{EQ4 e}, we find $\kappa'=k_1e^{-\int adz}\kappa$ and thus, by integration, $\kappa=\alpha e^{c_1+k_1\int e^{-\int adz}dz}$, where $c_1$ is a constant. By combining the above results together we get $w=\alpha k_1^{-2}[\cosh(c_1+k_1\int e^{-\int adz}dz)+1]e^{\int2adz}$.

\vskip 3pt

\noindent\textbf{Case 1b:} $\phi\not\equiv0$.

\vskip 3pt

We get from \eqref{EQ3} that $w'=p_1w+p_2$, where $p_1=\varphi/\phi$, $p_2=2\alpha'/\phi$, and it follows that $w''=(p_1'+p_1^2)w+p_1p_2+p_2'$. Substituting these results into
\eqref{HAYMANEQ1} with $\beta\equiv\gamma\equiv0$, we get
\begin{equation}\label{EQ5-fu}
\left(p_1'+ap_1+b\right)w^2+\left(-p_1p_2+p_2'+ap_2-\alpha\right)w+p_2^2=0
\end{equation}
with coefficients that are small functions of $w$. The Valiron--Mohon'ko theorem (see, e.g. \cite[Theorem~2.2.5]{Laine1993}) implies that all the coefficients in
\eqref{EQ5-fu} must vanish identically. Thus $-p_1p_2+p_2'+ap_2-\alpha=0$ and $p_2=0$. But this yields $\alpha\equiv0$, a contradiction.

\vskip 3pt

\noindent\textbf{Case 2:} $\beta\not\equiv0$, $\gamma\equiv0$.

\vskip 3pt

Recall that in this case $w$ has only simple zeros in $\Psi$. Substituting $w(z)=a_0\xi+a_1\xi^2+O(\xi^3)$ into \eqref{HAYMANEQ1}, we find at the leading order $a_0=-\beta(z_0)$ and at the next-to-leading order that $\alpha(z_0)+\beta'(z_0)+a(z_0)\beta(z_0)=0$.

\vskip 3pt

\noindent\textbf{Case 2a:} $\alpha+\beta'+a\beta\not\equiv{0}$, $\gamma\equiv0$.

\vskip 3pt

Let $f=w'/w$. If $z_0$ is a pole of $w$, then $z_0\in \Phi$; if $z_0$ is a zero of $w$, then either $z_0\in\Phi$ or $\alpha(z_0)+\beta'(z_0)+a(z_0)\beta(z_0)=0$. Since $a$, $b$, $\alpha$ and $\beta$ are all rational functions, by the definition of $\Phi$ we have $N(r,f)=O(\log r)$. Moreover, an application of the lemma on the logarithmic derivative shows that $m(r,f)=O(\log rT(r,w))$. Hence $T(r,f)=S(r,w)$. Substituting $w'=fw$ and $w''=(f'+f^2)w$ into \eqref{HAYMANEQ1} with $\gamma\equiv0$, we get
\begin{equation*}
\left(f'+a f+b\right)w=\alpha+\beta f.
\end{equation*}
Since $f'+a f+b$ and $\alpha+f\beta$ are both small functions of $w$, we must have
$f'+a f+b\equiv 0$ and $\alpha+\beta f\equiv0$. Thus $f=-\alpha/\beta$ and
$(-\alpha/\beta)'+a(-\alpha/\beta)+b=0$. These results give part~(2) of the theorem.

\vskip 3pt

\noindent\textbf{Case 2b:} $\alpha+\beta'+a\beta\equiv{0}$, $\gamma\equiv0$.

\vskip 3pt

Equation \eqref{HAYMANEQ1} takes the form $((w'+\beta)/w)'+a((w'+\beta)/w)+b=0$. By denoting $h=(w'+\beta)/w$, we have part~(3) of the theorem.

\vskip 3pt

\noindent\textbf{Case 3:} $\gamma\not\equiv0$.

\vskip 3pt

Recall that in this case $w$ is analytic in $\Psi$ and any zero $z_0$ of $w$ in $\Psi$ is
simple. On substituting $w(z)=a_0\xi+a_1\xi^2+O(\xi^3)$ into \eqref{HAYMANEQ1}, we find that $a_0^2+\beta(z_0)a_0+\gamma(z_0)=0$ and $a_1=\delta_1(z_0)a_0-\delta_2(z_0)$, where $\delta_1=[\gamma'+a(\gamma-\beta^2)-\beta(\alpha+\beta')]/(2\gamma)$ and $\delta_2=(\alpha+\beta'+a\beta)/2$ are two rational functions. Denote
\begin{equation}\label{EQ5}
\begin{split}
A=\frac{\beta(\alpha+\beta')-\gamma'-a(2\gamma-\beta^2)}{\gamma},\qquad
B=2\alpha+\beta'+a\beta
\end{split}
\end{equation}
and let
\begin{equation}\label{EQ6}
g(z)=\frac{w'^2+\beta{w'}+\gamma}{w^2}+A\frac{w'}{w}+B\frac{1}{w}.
\end{equation}
Then $g(z)$ is analytic in $\Psi$. Since $a$, $b$, $\alpha$, $\beta$ and $\gamma$ are all rational functions, by the definition of $\Psi$ we have $N(r,g)=O(\log r)$. Moreover, rewriting \eqref{HAYMANEQ1} as
\begin{equation}\label{EQ7}
\frac{1}{w^2}=\frac{1}{\gamma}\left[\left(\frac{w'}{w}\right)'+a\frac{w'}{w}+b-\frac{1}{w}\left(\alpha+\beta\frac{w'}{w}\right)\right]
\end{equation}
and then taking the proximity functions on both sides of \eqref{EQ7} together with the lemma on the logarithmic derivative gives $2m(r,1/w)\leq m(r,1/w)+O(\log rT(r,w))$ and so $m(r,1/w)=O(\log rT(r,w))$. Then, by the lemma on the logarithmic derivative, we obtain from \eqref{EQ6} that $m(r,g)=O(\log rT(r,w))$. Hence $T(r,g)=S(r,w)$. Multiplying by $w^2$ on both sides of \eqref{EQ6} gives
\begin{equation}\label{EQ6extra}
g(z)w^2=w'^2+Aw'w+Bw+\beta w'+\gamma.
\end{equation}
Then, by eliminating $w'^2$ from \eqref{EQ6extra} and \eqref{HAYMANEQ1} and then using \eqref{EQ6extra} to eliminate the terms $ww''$ and $w'^2$ together with the expressions of $A$ and $B$ in \eqref{EQ5}, we finally obtain
\begin{equation}\label{EQ9}
\left(A'+a A-2b\right)w'=\left(g'+2a g+Ab\right)w-E,
\end{equation}
where $E=B'+2a B+A\alpha+\beta(g-b)$.

\vskip 3pt

\noindent\textbf{Case 3a:} $A'+a A-2b\not\equiv0$.

\vskip 3pt

Since $g'+2a g+Ab$ and $E$ are both small functions of $w$, we have $g'+2a g+Ab\not\equiv0$. For convenience, we denote
\begin{equation}\label{EQ9 fu}
\begin{split}
h_1=\frac{g'+2a g+Ab}{A'+a A-2b}, \qquad h_2=\frac{-E}{A'+a A-2b}.
\end{split}
\end{equation}
Then we have $w'=h_1w+h_2$. Substituting $w'=h_1w+h_2$ into \eqref{EQ6extra}, we have
\begin{equation*}
\left(g-h_1^2-Ah_1\right)w^2-\left(2h_1h_2+\beta h_1+Ah_2+B\right)w-h_2^2-\beta h_2-\gamma=0
\end{equation*}
with coefficients that are small functions of $w$. The Valiron--Mohon'ko theorem implies that all the coefficients of the above equation must vanish identically. In particular, we have $g=h_1^2+Ah_1$. Thus the function $G=g+A^2/4$ satisfies the equation $(G'+2aG)^2=(A'+aA-2b)^2G$. Then we may write $G=H^2$ for a meromorphic function $H$ and obtain $2H'+2aH=A'+aA-2b$. Together with this relation, we substitute $g=H^2-A^2/4$ into the expression of $h_1$ and find that $h_1=H-A/2$. It follows that $h_1$ always satisfies the equation $h_1'+ah_1+b=0$. Since $A'+a A-2b\not\equiv0$ and $g'+2a g+Ab\not\equiv0$, we see that $G\not\equiv0$. Note that $w''=(h_1'+h_1^2)w+h_1h_2+h_2'$. Then $w$ solves \eqref{HAYMANEQ1} if and only if
\begin{equation*}
\left(-h_1h_2+h_2'+ah_2-\alpha-\beta h_1\right)w+h_2^2+\beta h_2+\gamma=0
\end{equation*}
with coefficients being small functions of $w$. So we have $-h_1h_2+h_2'+ah_2-\alpha-\beta{h_1}\equiv0$ and $h_2^2+\beta{h_2}+\gamma\equiv0$. Moreover, since $\gamma\not\equiv0$, we see that $h_2\not\equiv0$ and $h_2+\beta\not\equiv0$, which imply that $h_1,h_2$ are both nonzero rational functions. These results give part~(4) of the theorem.

\vskip 3pt

\noindent\textbf{Case 3b:} $A'+aA-2b\equiv0$.

\vskip 3pt

Since $g'+2a g+Ab$ and $E$ are both small functions of $w$, it follows from \eqref{EQ9} that $g'+2a g+Ab\equiv0$ and $E\equiv0$. Now equation \eqref{EQ6extra} can be rewritten as
\begin{equation}\label{EQ10}
\left(w'+\frac{1}{2}[Aw+\beta]\right)^2=\left(g+\frac{A^2}{4}\right)w^2+\left(\frac{\beta}{2}A-B\right)w+\left(\frac{\beta^2}{4}-\gamma\right).
\end{equation}
Since all poles of $A(z)$ and $a(z)$ are located in the finite disk $D=\{z: |z|<r_0\}$, where $r_0$ is a positive constant, then $h(z):=(\frac{\beta}{2}A-B)e^{\int(\frac{A}{2}+a)dz}$ is well defined and analytic in $\mathbb{C}-\bar{D}$. In $\mathbb{C}-\bar{D}$, the first equation of \eqref{EQ5} yields
\begin{equation}\label{EQ11}
\left(\left[\frac{\beta^2}{4}-\gamma\right]e^{\int(A+2a)dz}\right)'=\frac{\beta}{2}e^{\int(\frac{A}{2}+a)dz}h.
\end{equation}
Together with the second equation in \eqref{EQ5} and the relation $A'+aA-2b\equiv0$, we find that the condition $E\equiv0$ is equivalent to
\begin{equation}\label{EQ12}
h'+a h=\left(g+\frac{A^2}{4}\right)\beta{e^{\int(\frac{A}{2}+a)dz}}
\end{equation}
in $\mathbb{C}-\bar{D}$. Clearly, if $g=-A^2/4$, then $h'+a h=0$ and it follows that $k_1=(\frac{\beta}{2}A-B)e^{\int(\frac{A}{2}+2a)dz}$ is a nonzero constant if $h\not\equiv0$. Moreover, if $\beta^2/4\equiv\gamma$, then from \eqref{EQ11} we see that $h\equiv0$ and then from \eqref{EQ12} that $g=-A^2/4$.

\vskip 3pt

\noindent\textbf{Case 3b(i)}: $g+A^2/4\not\equiv0$.

\vskip 3pt
Equations $A'+a A-2b\equiv0$ and $g'+2a g+Ab\equiv0$ implies that $G=g+A^2/4$ satisfies the equation $G'+2a G=0$. We may write $G=k_1^2e^{-\int 2a dz}$, where $k_1$ is a nonzero constant and $e^{-\int 2a dz}$ is a meromorphic function with at most finitely many zeros and poles. It follows from \eqref{EQ11} and \eqref{EQ12} that
\begin{equation}\label{EQ12a}
\left(\left[\frac{\beta^2}{4}-\gamma\right]e^{\int(A+2a)dz}\right)'=\frac{h}{2k_1^2}(h'+a h)e^{\int 2a dz}=\frac{1}{4k_1^2}\left(h^2e^{\int 2a dz}\right)'.
\end{equation}
Thus, in $\mathbb{C}-\bar{D}$, we may integrate both sides of equation \eqref{EQ12a} to obtain that
\begin{equation}\label{EQ13}
k_2^2=\frac{1}{k_1^2}\left[\frac{1}{4k_1^2}\left(\frac{\beta}{2}A-B\right)^2e^{\int 2a dz}-\left(\frac{\beta^2}{4}-\gamma\right)\right]e^{\int(A+2a)dz}
\end{equation}
is a constant. By analytic continuation, equation \eqref{EQ13} holds for all $z$ in the plane in both of the two cases $k_2=0$ and $k_2\not=0$. In particular, if $k_2\not=0$, then $e^{\int Adz}$ is also a meromorphic function with at most finitely many zeros and poles. Let $u=we^{\int(\frac{A}{2}+a)dz}+\frac{h}{2k_1^2}e^{\int 2a dz}$. By the definition of $h$, in $\mathbb{C}-\bar{D}$ equation \eqref{EQ10} becomes
\begin{equation}\label{EQ14}
(u'-a u)^2=k_1^2e^{-\int 2a dz}\left(u^2-k_2^2e^{\int 2a dz}\right)=k_1^2e^{-\int 2a dz}u^2-k_1^2k_2^2.
\end{equation}

If $k_2\not=0$, then $e^{\int a dz}$ and $e^{\int \frac{A}{2} dz}$ are in general algebroid functions and has at most finitely many algebraically branched points in the plane, as mentioned in the introduction. In this case, we may fix one branch of $u$. The function $v=e^{-\int a dz}u$ leads equation \eqref{EQ14} to
\begin{equation}\label{EQ14fuj}
v'^2e^{\int 2a dz}=k_1^2\left(v^2-k_2^2\right).
\end{equation}
Rewrite \eqref{EQ14fuj} as
\begin{equation*}
\left(v'e^{\int a dz}+k_1v\right)\left(v'e^{\int a dz}-k_1v\right)=-k_1^2k_2^2.
\end{equation*}
Denote $\kappa:=v'e^{\int a dz}+k_1v$. It follows that $v'e^{\int a dz}-k_1v=-k_1^2k_2^2\kappa^{-1}$ and further that
\begin{equation}\label{EQ14fujht}
\begin{split}
v=\frac{1}{2k_1}\left(\kappa+k_1^2k_2^2\kappa^{-1}\right),\qquad
v'=\frac{1}{2}\left(\kappa-k_1^2k_2^2\kappa^{-1}\right)e^{-\int a dz}.
\end{split}
\end{equation}
By taking the derivatives on both sides of the first equation in \eqref{EQ14fujht} and then comparing the resulting equation with the second equation in \eqref{EQ14fujht}, we find that $\kappa'/\kappa=k_1e^{-\int a dz}$ and thus, by integration, $\kappa=k_1k_2\exp(c_1+{k_1\int e^{-\int a dz} dz})$, where $c_1$ is a constant. Therefore, $u=k_2e^{\int a dz}\cosh(c_1+k_1\int e^{-\int a dz} dz)$. Since $u=we^{\int(\frac{A}{2}+a)dz}+\frac{h}{2k_1^2}e^{\int 2a dz}$, we have
\begin{equation}\label{EQ14fujhthil}
\begin{split}
w=k_2 e^{-\int\frac{A}{2}dz}\cosh \left(c_1+k_1\int e^{-\int a dz}dz\right)-\frac{1}{2k_1^2}\left(\frac{\beta}{2}A-B\right)e^{\int 2a dz}.
\end{split}
\end{equation}
Since $w$ and $e^{\int 2a dz}$ are both meromorphic functions and since $\cosh(c_1+k_1\int e^{-\int a dz}dz)$ cannot have any algebraically branched points in the plane, we see from the expression of $w$ in \eqref{EQ14fujhthil} that $e^{-\int\frac{A}{2}dz}$ cannot have algebraically branched points in the plane either. Thus $e^{-\int\frac{A}{2}dz}$ is a meromorphic function. Moreover, if $e^{\int 2a dz}$ is a rational function, then from \eqref{EQ13} we see that $e^{\int \frac{A}{2}dz}$ is also a rational function. If $e^{\int 2a dz}$ is transcendental, since $E\equiv0$, we must have $\beta\equiv0$. Recall that $\gamma\not\equiv0$. If $B\not\equiv0$, then the function $B^2e^{\int 2a dz}/4k_1^2+\gamma$ would have infinitely many zeros by the second main theorem of Yamanoi for rational functions as targets~\cite{yamanoi:05}. Thus by \eqref{EQ13} we must have $B\equiv0$, i.e., $\alpha\equiv0$. It follows that $A+2a=-\gamma'/\gamma$ and, together with the relation $A'+aA-2b=0$, that $2(a'+a^2+b)+(\gamma'/\gamma)'+a(\gamma'/\gamma)=0$. This gives part (5)(a) of the theorem.

If $k_2=0$, then by \eqref{EQ13} we see that $e^{\int 2a dz}$ is a rational function. By \eqref{EQ14}, $u$ satisfies the first order differential equation $u'-a u=\pm k_1e^{-\int a dz}u$ in $\mathbb{C}-\bar{D}$ and thus, by integration, we have $u=c_1e^{\int{(a\pm k_1e^{-\int a dz}})dz}$ in $\mathbb{C}-\bar{D}$, where $c_1$ is a nonzero constant. Since $u=we^{\int(\frac{A}{2}+a)dz}+\frac{h}{2k_1^2}e^{\int 2a dz}$, we thus have, in $\mathbb{C}-\bar{D}$,
\begin{equation*}
\begin{split}
w=c_1e^{\int{\left(-\frac{A}{2}\pm k_1e^{-\int a dz}\right)}dz}-\frac{1}{2k_1^2}\left(\frac{\beta}{2}A-B\right)e^{\int 2a dz}.
\end{split}
\end{equation*}
Since $w$ and $e^{\int 2a dz}$ are both meromorphic functions, by analytic continuation we see that $e^{\int{(-\frac{A}{2}\pm k_1e^{-\int a dz})}dz}$ is also a meromorphic function. This implies that $e^{-\int a dz}$ cannot have any branched points in the plane and thus is a rational function. These results give Part (5)(b) of the theorem.

\vskip 3pt

\noindent\textbf{Case 3b(ii)}: $g+A^2/4\equiv0$, $h\not\equiv0$.

\vskip 3pt

In this case $h'+ah=0$. We may write $h=k_1e^{-\int a dz}$ and, recalling $h=(\frac{\beta}{2}A-B)e^{\int(\frac{A}{2}+a)dz}$ in $\mathbb{C}-\bar{D}$, it follows that $k_1=(\frac{\beta}{2}A-B)e^{\int(\frac{A}{2}+2a)dz}$ is a nonzero constant. By analytic continuation, $e^{\int(\frac{A}{2}+2a)dz}$ is a rational function. Then from \eqref{EQ11} we have
\begin{equation}\label{EQ11 a}
\left(\frac{\beta^2}{4}-\gamma\right)'+(A+2a)\left(\frac{\beta^2}{4}-\gamma\right)=\frac{\beta}{2}\left(\frac{\beta}{2}A-B\right).
\end{equation}
By equation \eqref{EQ10}, we may write $\left(\frac{\beta}{2}A-B\right)w+\left(\frac{\beta^2}{4}-\gamma\right)=\left(\frac{\beta}{2}A-B\right)^2\frac{H^2}{4}$ for a meromorphic function $H$. Then we have
\begin{equation*}
\begin{split}
w&=\left(\frac{\beta}{2}A-B\right)\frac{H^2}{4}-\frac{\frac{\beta^2}{4}-\gamma}{\frac{\beta}{2}A-B},\\
w'&=\left(\frac{\beta}{2}A-B\right)'\frac{H^2}{4}+\left(\frac{\beta}{2}A-B\right)\frac{HH'}{2}-\left(\frac{\frac{\beta^2}{4}-\gamma}{\frac{\beta}{2}A-B}\right)'.
\end{split}
\end{equation*}
Substituting the above two expressions for $w$ into equation \eqref{EQ10} together with the relation $(\frac{\beta}{2}A-B)'+(\frac{A}{2}+2a)(\frac{\beta}{2}A-B)=0$ and equation \eqref{EQ11 a}, we finally obtain that $H'=aH+1$. These results give part (5)(c) of the theorem.

\vskip 3pt

\noindent\textbf{Case 3b(iii)}: $g+A^2/4\equiv0$, $h\equiv0$.

\vskip 3pt

From \eqref{EQ10} and \eqref{EQ11}, it follows that $w'+\frac{1}{2}(Aw+\beta)=k_1e^{-\int(\frac{A}{2}+a)dz}$, where $k_1^2=(\frac{\beta^2}{4}-\gamma)e^{\int(A+2a)dz}$ is a constant. If $k_1\not=0$, we see that $e^{\int(\frac{A}{2}+a)dz}$ is a rational function. These results give part~(5)(d) of the theorem. Otherwise, we have $k_1=0$ and it follows that $w'+\frac{1}{2}(Aw+\beta)=0$. These results give part (5)(e) of the theorem and also complete the proof.

\end{proof}




\end{document}